\journalname{xxx}
\newtheorem{defi}{\noindent Definition}%[section]
\newtheorem{coro}{\noindent Corollary}%[section]
\newtheorem{prop}{\noindent Proposition}%[section]
\begin{document}

\title{Two stochastic versions of the Arps curve decline}
\author{Ch. Paroissin}
\institute{
Christian Paroissin \at 
Universite de Pau et des Pays de l’Adour, E2S UPPA, CNRS, LMAP, Pau, France.
\email{christian.paroissin@univ-pau.fr}
}
\date{Last version: \today}

\maketitle

\doclicenseThis

\begin{abstract} 
Based on the Arps equation, we propose two stochastic models for curve decline useful in oil engineering context. Theoretical properties and simulations of these models are provided. The first passage time distribution of these stochastic models to a constant level is then studied. In conclusion, we discuss about statistical inference of the parameters from the observations of the oil production cumulative rate.

\keywords{curve decline \and Arps equation \and Stochastic differential equation \and Hitting time distribution \and Stochastic comparison} 
\subclass{60H10 \and 60K30 \and 62M05}
\end{abstract}
	
%%%%%%%%%%%%%%%%%%%%%%%%%%%%%%%%%%%%%%%%%%%%%%%%%%%%%%%%%%%%%%%%%%%
\section{Introduction}
%%%%%%%%%%%%%%%%%%%%%%%%%%%%%%%%%%%%%%%%%%%%%%%%%%%%%%%%%%%%%%%%%%%

Arps \cite{Arps} (see also Chapter~9 in \cite{LW})) introduced several decades ago an empirical decline equation during pseudo-steady state period. This curve decline is of the following form:
\begin{equation}\label{eqn:arps}
\forall t \geqslant 0 , \quad q_t = \frac{q_0}{(1+bd_0t)^{1/b}},
\end{equation}
where $q_t$ is the oil production rate at production time $t$ and $q_0$ is the initial oil production rate and where $b$ and $d_i$ are two constants representing respectively a shape parameter and the initial flow. The case $b=0$ represents an exponential decline:
$$
\forall t \geqslant 0 , \quad q_t = q_0 e^{-d_0t},
$$
and $b=1$ represents a harmonic decline in oil production. In other cases ($0<b<1$), Equation~(\ref{eqn:arps}) is defined as the hyperbolic model. Although this decline curve proposed by Arps is based only on an empirical study, this equation is still widely used in oil industry studies (see \cite{DeanMireault} or \cite{GaskariMohagheghJalali}, for instance). 

Here we propose stochastic models for curve decline analysis (section 2). These models are based on stochastic differential equations such that the expectation is the Arps equation. Uncertainty in decline curve analysis were already considered. For instance, Chang and Lin \cite{ChangLin} have proposed a regression linear analysis of the decline curve after applying a transformation to fit data. We then study the important question of how long will the oil production rate be larger than a certain level (section 3). Such problem is difficult and no exact computations can be carried out (mainly numerical ones). In conclusion, we discuss about statistical estimation of the parameters. Most of the time the oil production cumulative rate is observed.

%%%%%%%%%%%%%%%%%%%%%%%%%%%%%%%%%%%%%%%%%%%%%%%%%%%%%%%%%%%%%%%%%%%
\section{Stochastic Arps model}
%%%%%%%%%%%%%%%%%%%%%%%%%%%%%%%%%%%%%%%%%%%%%%%%%%%%%%%%%%%%%%%%%%%

We introduce here a set of stochastic differential equations (SDEs) modelling curve  decline for oil production for instance. These SDEs are of the form:
\begin{equation}\label{eqn:gsde}
\mathrm{d}Q_t = -\frac{d_0Q_t}{1+bd_0t} \mathrm{d}t + \alpha(Q_t)\mathrm{d}B_t ,
\end{equation}
with initial condition $Q_0=q_0$ and where $(B_t)$ is a Brownian motion and $\alpha(\cdot)$ a real function. These SDEs involve the two parameters $d_0$ and $b$ that appear in the Arps equation. 

When $b$ tends to zero, both the drift and the volatility do not depend no more on time (only on space); in such case the corresponding diffusion process is said to be homogeneous. In the general setting, if the function $\alpha(\cdot)$ satisfies the following assumptions:
\begin{itemize}
\item[$(A_1)$] (Lipschitz condition) there exists a constant $\kappa_1>0$ such that:
$$
\forall t \in [0,T], \forall (x,y) \in \mathbb{R}^2 , \quad | \alpha(x)-\alpha(y)| \leqslant \kappa_1 |x-y|,
$$ 
\item[$(A_2)$] (linear growth bound) there exists a constant $\kappa_2>0$ such that:
$$
\forall t \in [0,T], \forall x \in \mathbb{R} , \quad | \alpha(x)| \leqslant \kappa_2^2 (1+|x|^2),
$$
\end{itemize}
then the corresponding SDE admits an unique strong solution on $[0,T]$, applying theorem~4.5.3 in \cite{KP} for instance. Two special cases, for which the above conditions are clearly satisfied, will be studied below. The first one is when the volatility is constant and the second is a case where the volatility is linearly increasing. It follows that these two special cases leads to scalar linear SDEs (see \cite{KP}, page~110).

%------------------------------------------%
\subsection{Case with constant volatility}
%------------------------------------------%

In this subsection we will assume that the volatility is constant: $\alpha(x)=\sigma>0$. Thus Equation~(\ref{eqn:gsde}) turns to be:
\begin{equation}\label{eqn:sde1}
\mathrm{d}Q_t = -\frac{d_0Q_t}{1+bd_0t} \mathrm{d}t + \sigma \mathrm{d}B_t,
\end{equation}
This choice of the function $\alpha(\cdot)$ can be motivated as follows: this SDE can be interpreted as a stochastic perturbation of the ordinary differential equation satisfied by $q_t$. 

When the parameter $b$ tends to 0, the SDE is the so-called Ornstein-Uhlenbeck process \cite{Oksendal}. The solution is then given by:
\begin{equation}\label{eqn:sol.ou}
\forall t \geqslant 0 , \quad Q_t = q_0 e^{-d_0t} + \sigma e^{-d_0t} \int_0^t e^{d_0u}\,\mathrm{d}B_u.
\end{equation}
As for this special case, the solution can be computed:

\begin{prop}
The solution of Equation~(\ref{eqn:sde1}) is:
\begin{equation}\label{eqn:sol.sde1}
\forall t \geqslant 0 , \quad Q_t = q_0 (1+bd_0t)^{-1/b} + \sigma (1+bd_0t)^{-1/b} \int_0^t (1+bd_0u)^{1/b}\,\mathrm{d}B_u .
\end{equation}
\end{prop}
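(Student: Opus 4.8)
The plan is to verify that the proposed formula solves the linear SDE by directly applying Itô's formula, since this is a scalar linear SDE with deterministic, time-dependent coefficients. First I would introduce the integrating factor suggested by the drift. The homogeneous part of the equation, $\mathrm{d}Q_t = -\frac{d_0 Q_t}{1+bd_0t}\,\mathrm{d}t$, is a deterministic linear ODE whose solution is proportional to $(1+bd_0t)^{-1/b}$; this is exactly the deterministic Arps curve~(\ref{eqn:arps}). So the natural integrating factor is $\phi_t = (1+bd_0t)^{1/b}$, and I would compute $\phi_t' = d_0(1+bd_0t)^{1/b-1}$, which gives the clean identity $\frac{\phi_t'}{\phi_t} = \frac{d_0}{1+bd_0t}$.

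The key computation is to apply Itô's formula to the product $\phi_t Q_t$, where $Q_t$ is assumed to satisfy~(\ref{eqn:sde1}). Since $\phi_t$ is a smooth deterministic function of bounded variation, there is no quadratic-covariation term, and I expect
\begin{equation*}
\mathrm{d}(\phi_t Q_t) = \phi_t'\,Q_t\,\mathrm{d}t + \phi_t\,\mathrm{d}Q_t = \phi_t'\,Q_t\,\mathrm{d}t + \phi_t\left(-\frac{d_0 Q_t}{1+bd_0t}\,\mathrm{d}t + \sigma\,\mathrm{d}B_t\right).
\end{equation*}
Using the identity $\phi_t' = \frac{d_0}{1+bd_0t}\phi_t$, the two drift contributions cancel exactly, leaving $\mathrm{d}(\phi_t Q_t) = \sigma\,\phi_t\,\mathrm{d}B_t$. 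Integrating from $0$ to $t$ and using $\phi_0 = 1$ and $Q_0 = q_0$ gives $\phi_t Q_t = q_0 + \sigma\int_0^t \phi_u\,\mathrm{d}B_u$, and dividing through by $\phi_t = (1+bd_0t)^{1/b}$ yields precisely the claimed formula~(\ref{eqn:sol.sde1}).

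For completeness I would note that existence and uniqueness of the strong solution is already guaranteed by the discussion preceding the statement (constant volatility trivially satisfies assumptions $(A_1)$ and $(A_2)$), so the computation above both confirms that the stated process is \emph{a} solution and, by uniqueness, that it is \emph{the} solution. I anticipate no genuine obstacle here: the problem is essentially a textbook variation-of-constants argument for a linear SDE, and the only point requiring a little care is checking that the integrating factor exactly annihilates the drift. The derivative computation for $\phi_t$ and the observation that the stochastic integral $\int_0^t (1+bd_0u)^{1/b}\,\mathrm{d}B_u$ is well defined (its integrand is deterministic, continuous, and square-integrable on $[0,t]$) are the only details to spell out; everything else follows mechanically. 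One could alternatively guess the form of the solution by analogy with the Ornstein--Uhlenbeck solution~(\ref{eqn:sol.ou}), which is recovered in the limit $b\to 0$, and then verify directly, but the integrating-factor route is cleaner and self-contained.
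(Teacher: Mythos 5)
Your integrating-factor argument with $\phi_t=(1+bd_0t)^{1/b}$ is exactly the paper's proof, which sets $X_t=(1+bd_0t)^{1/b}Q_t$, observes the drift cancels so that $\mathrm{d}X_t=\sigma(1+bd_0t)^{1/b}\,\mathrm{d}B_t$, integrates, and divides back. The proposal is correct and matches the paper's approach step for step.
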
 

\begin{proof}
For all $t\geqslant 0$, set $X_t = (1+bd_0t)^{1/b}Q_t$ with initial value $X_0=Q_0=q_0$. It follows that:
\begin{eqnarray*}
dX_t 
 & = & d_0(1+bd_0t)^{1/b-1}Q_t\mathrm{d}t + (1+bd_0t)^{1/b} \mathrm{d}Q_t \\
 & = & \sigma (1+bd_0t)^{1/b} \mathrm{d}B_t .
\end{eqnarray*}
Thus it follows that:
$$
X_t = X_0 + \sigma \int_0^t (1+bd_0u)^{1/b} \mathrm{d}B_u.
$$
Back to the initial process, one obtains the following solution of the SDE:
$$
\forall t \geqslant 0 , \quad Q_t = q_0 (1+bd_0t)^{-1/b} + \sigma (1+bd_0t)^{-1/b} \int_0^t (1+bd_0u)^{1/b}\,\mathrm{d}B_u.
$$
\end{proof}

The integral in the solution is a Wiener one, it follows that $(Q_t)$ is a Gaussian process with mean:
$$
\forall t \geqslant 0 , \quad \mathbb{E}[Q_t] = q_0 (1+bd_0t)^{-1/b} = q_t
$$
and variance:
$$
\forall t \geqslant 0 , \quad {\mbox{Var}}[Q_t] = \sigma^2 (1+bd_0t)^{-2/b} \int_0^t (1+bd_0u)^{2/b}\,\mathrm{d}u 
= \frac{\sigma^2(1+bd_0t)}{d_0(2+b)}\left(1-(1+bd_0t)^{-2/b-1}\right) .
$$
In addition, the covariance between $Q_s$ and $Q_t$, for any $(s,t) \in \mathbb{R}_+^2$, is given by :
$$
{\mbox{Cov}}[Q_s, Q_t] = \sigma^2 (1+bd_0s)^{-1/b} (1+bd_0t)^{-1/b} \int_0^{\min(s,t)} (1+bd_0u)^{2/b}\,\mathrm{d}u .
$$
It follows that the expectation of this SDE is precisely the Arps equation. When considering Equation~(\ref{eqn:sol.sde1}) and with $b=0$, one recovers the solution given in Equation~(\ref{eqn:sol.ou}) for the Ornstein-Uhlenbeck process.

One can express $(Q_t)$ as a Brownian motion with a time scale change. Indeed applying the Dambis-Dubins-Schwartz theorem \cite{RevuzYor}, there exists a Brownian motion $(W_t)$ such that:
$$
Z_t = \int_0^t (1+bd_0u)^{1/b}\,\mathrm{d}B_u \stackrel{(d)}{=} W_{\tau(t)} ,
$$
where $\stackrel{(d)}{=}$ stands for equality in distribution and where:
$$
\tau(t)={\mbox{Var}}[Z_t] = \frac{1}{d_0(b+2)} \left[ \left(1+bd_0t\right)^{\tfrac{2}{b}+1}-1 \right].
$$
This expression is the same as the one in \cite{APP} when the parameter $b$ tends to zero. In any case, as a consequence, the following representation holds:
\begin{equation}\label{eqn:dds}
Q_t \stackrel{(d)}{=} (1+bd_0t)^{-1/b} (q_0+\sigma W_{\tau(t)}) .
\end{equation}
On Figure \ref{fig:sde1} we present simulations of this SDE for various values of the parameter $b$ and with $d_0 = 3 \times 10^{-4}$, $q_0 = 380$ and $\sigma^2 = 1$. To do so, we have considered the Euler–Maruyama method (see \cite{KP}, for instance). We have used the same simulations of the normal distribution in order to compare the different paths. For fixed value of $t$, $q_t$ is decreasing with parameter $b$. Hence, using the same simulations of normally distributed random variables, it follows that, for any $t$, sample paths are increasing with $b$.
\begin{figure}[htp!]
\begin{center}
\includegraphics[width=12cm]{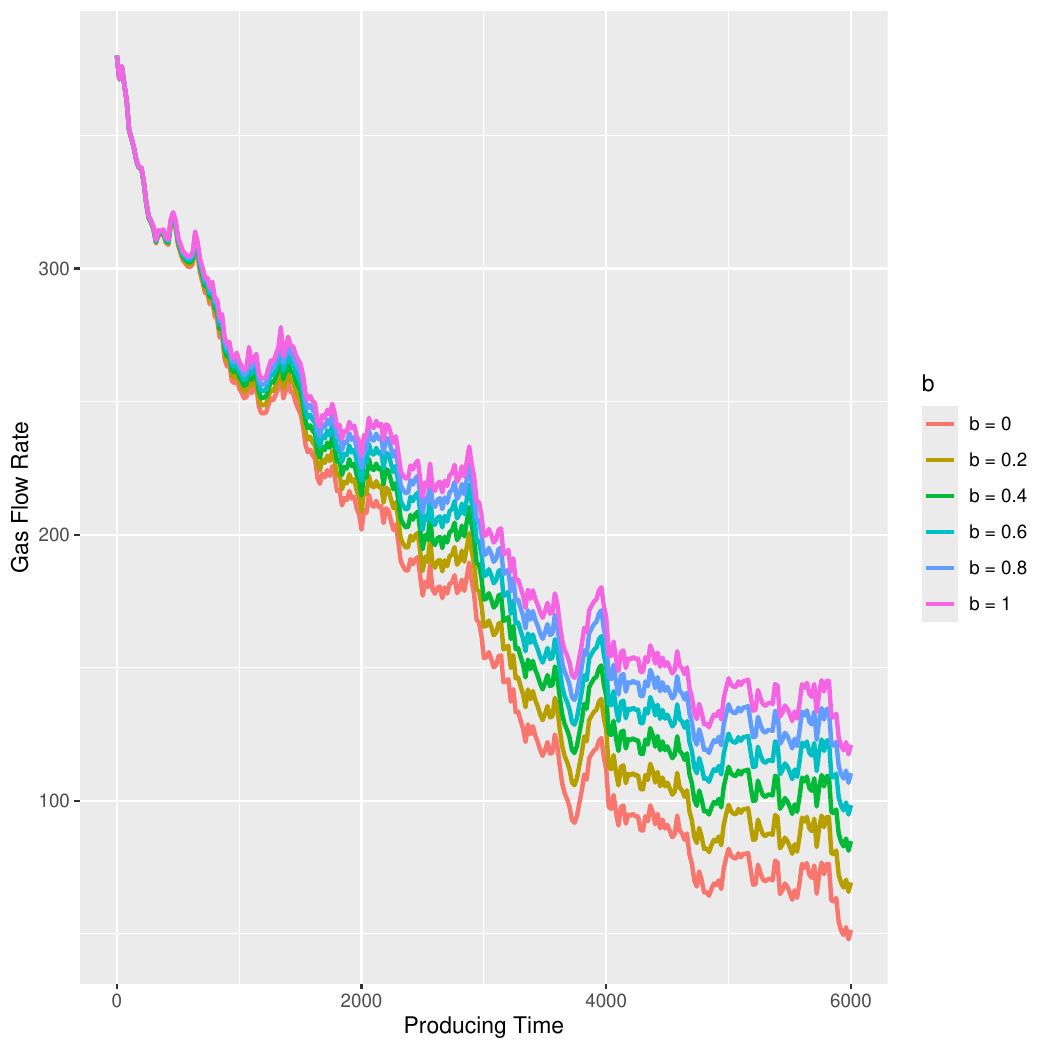}
\end{center}
\caption{Simulations of the first stochastic models}
\label{fig:sde1}
\end{figure}

%------------------------------------------%
\subsection{Case with linear volatility}
%------------------------------------------%

Now we will consider a case where the volatility is increasing. More precisely we assume that $\alpha(x)=\sigma x$ with $\sigma>0$. Thus Equation~(\ref{eqn:gsde}) turns to be:
\begin{equation}\label{eqn:sde2}
\mathrm{d}Q_t = -\frac{d_0Q_t}{1+bd_0t} \mathrm{d}t + \sigma Q_t\mathrm{d}B_t ,
\end{equation}
This choice of the function $\alpha(\cdot)$ can be motivated as follows: this SDE can be interpreted as a stochastic perturbation of the ordinary differential equation satisfied by $\ln(q_t)$. 

When the parameter $b$ tends to 0, the SDE is the geometric Brownian motion \cite{Oksendal} and the solution is given by:
\begin{equation}\label{eqn:sol.gbm}
\forall t \geqslant 0 , \quad Q_t = q_0 e^{-d_0t} \exp\left( -\frac{1}{2}\sigma^2 t + \sigma B_t \right) .
\end{equation}
As for this special case, the solution can be computed:

\begin{prop}
The solution of Equation~(\ref{eqn:sde2}) is:
\begin{equation}\label{eqn:sol.sde2}
\forall t \geqslant 0 , \quad Q_t = q_0 (1+bd_0t)^{-1/b} \exp\left( -\frac{1}{2}\sigma^2 t + \sigma B_t \right) .
\end{equation}
\end{prop}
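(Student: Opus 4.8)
The plan is to reuse the integrating-factor substitution from the previous proposition. Setting $X_t = (1+bd_0t)^{1/b}Q_t$ with $X_0 = q_0$, I would compute $\mathrm{d}X_t$ by the Itô product rule. The deterministic factor $(1+bd_0t)^{1/b}$ has time derivative $d_0(1+bd_0t)^{1/b-1}$, so its finite-variation contribution $d_0(1+bd_0t)^{1/b-1}Q_t\,\mathrm{d}t$ exactly cancels the drift term $(1+bd_0t)^{1/b}\times\left(-\frac{d_0Q_t}{1+bd_0t}\right)\mathrm{d}t$ coming from the SDE, exactly as in the constant-volatility case. What survives is the diffusive part, leaving the driftless equation $\mathrm{d}X_t = \sigma(1+bd_0t)^{1/b}Q_t\,\mathrm{d}B_t = \sigma X_t\,\mathrm{d}B_t$.

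Next I would solve this standard (time-homogeneous, driftless) geometric Brownian motion. Applying Itô's formula to $\ln X_t$ gives $\mathrm{d}\ln X_t = \frac{1}{X_t}\mathrm{d}X_t - \frac{1}{2X_t^2}\mathrm{d}\langle X\rangle_t = \sigma\,\mathrm{d}B_t - \frac{1}{2}\sigma^2\,\mathrm{d}t$, using $\mathrm{d}\langle X\rangle_t = \sigma^2 X_t^2\,\mathrm{d}t$. Integrating from $0$ to $t$ and exponentiating yields $X_t = q_0\exp\!\left(-\frac{1}{2}\sigma^2 t + \sigma B_t\right)$. Dividing by the integrating factor then recovers the announced formula $Q_t = q_0(1+bd_0t)^{-1/b}\exp\!\left(-\frac{1}{2}\sigma^2 t + \sigma B_t\right)$.

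I do not expect a genuine obstacle here, since this is a scalar linear SDE whose solution is classical, but two points deserve care. First, the Itô correction $-\frac{1}{2}\sigma^2$ in the logarithmic transform must not be dropped; it is precisely what produces the $\exp\!\left(-\frac{1}{2}\sigma^2 t\right)$ factor. Second, applying $\ln$ presupposes $X_t>0$; since $q_0>0$ and an exponential is positive this is consistent, but to be fully rigorous one may instead directly verify by an Itô computation that the candidate $(\ref{eqn:sol.sde2})$ satisfies $(\ref{eqn:sde2})$ with the correct initial value, and then invoke the uniqueness guaranteed by assumptions $(A_1)$ and $(A_2)$ through theorem~4.5.3 in \cite{KP}. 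As a sanity check, letting $b\to 0$ one has $(1+bd_0t)^{-1/b}\to e^{-d_0t}$, so the formula collapses to the geometric Brownian motion solution $(\ref{eqn:sol.gbm})$.
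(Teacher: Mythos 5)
Your proof is correct, but it is organized differently from the paper's. The paper attacks Equation~(\ref{eqn:sde2}) head-on: it divides the SDE by $Q_t$, integrates the relative differential $\int_0^t \mathrm{d}Q_u/Q_u$, and evaluates that integral by applying It\^o's formula to $\ln Q_t$, so the time-dependent drift $-d_0/(1+bd_0t)$ is absorbed directly into the logarithm as the term $-\tfrac{1}{b}\ln(1+bd_0t)$. You instead reuse the deterministic integrating factor $X_t=(1+bd_0t)^{1/b}Q_t$ from the constant-volatility proposition, cancel the drift first, and only then take logarithms of the resulting driftless geometric Brownian motion $\mathrm{d}X_t=\sigma X_t\,\mathrm{d}B_t$. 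The two computations are algebraically equivalent (since $\ln X_t=\tfrac{1}{b}\ln(1+bd_0t)+\ln Q_t$, the same It\^o correction $-\tfrac{1}{2}\sigma^2 t$ appears in both), but your route has the merit of treating the two special cases of Equation~(\ref{eqn:gsde}) by one and the same substitution, which makes the linear-SDE structure more transparent; the paper's route is slightly shorter for this particular case. You are also right to flag that both arguments implicitly assume $Q_t>0$ (the paper divides by $Q_t$ without comment), and your fallback --- verify the candidate by a direct It\^o computation and invoke uniqueness under $(A_1)$--$(A_2)$ --- is the cleanest way to make either proof fully rigorous.
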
 

\begin{proof}
Let us divide Equation~(\ref{eqn:sde2}) by $Q_t$. It gives:
$$
\frac{\mathrm{d}Q_t}{Q_t} = -\frac{d_0}{1+bd_0t} \mathrm{d}t + \sigma \mathrm{d}B_t .
$$
It follows that:
\begin{equation}\label{eqn:1}
\int_0^t \frac{\mathrm{d}Q_u}{Q_u} = -\frac{1}{b}\ln(1+bd_0t) + \sigma B_t .
\end{equation}
Thus one has to evaluate the left hand-side of this equation. Let us apply the Itô formula \cite{Oksendal} to $\ln Q_t$:
$$
\mathrm{d} \ln Q_t = \frac{\mathrm{d}Q_t}{Q_t} - \frac{1}{2}\sigma^2 \mathrm{d}t
$$
Integrating this equation, one gets:
\begin{equation}\label{eqn:2}
\int_0^t \frac{\mathrm{d}Q_u}{Q_u} = \ln\left(\frac{Q_t}{q_0}\right) + \frac{1}{2}\sigma^2 t . 
\end{equation}
Joining Equations (\ref{eqn:1}) and (\ref{eqn:2}), it gives:
$$
\ln\left(\frac{Q_t}{q_0}\right) = -\frac{1}{b}\ln(1+bd_0t) - \frac{1}{2}\sigma^2 t + \sigma B_t .
$$
Finally one gets:
$$
Q_t = q_0 (1+bd_0t)^{-1/b} \exp\left( -\frac{1}{2}\sigma^2 t + \sigma B_t \right) .
$$
\end{proof}

Clearly, for any $t$, $Q_t$ has the log-normal distribution with parameters $q_t - \tfrac{1}{2}\sigma^2 t$ and $\sigma^2 t$. Thus one gets that:
$$
\forall t \geqslant 0 , \quad \mathbb{E}[Q_t] = q_0 (1+bd_0t)^{-1/b}
$$
and variance:
$$
\forall t \geqslant 0 , \quad {\mbox{Var}}[Q_t] = q_0^2 (1+bd_0t)^{-2/b}\left( e^{\sigma^2 t}-1\right)   .
$$
Thus, as for the previous model, the expectation of this SDE is the Arps equation.

On Figure \ref{fig:sde2} we present the simulation of this SDE for various values of the parameter $b$ and with $d_0 = 3 \times 10^{-4}$, $q_0 = 380$ and $\sigma^2 = 10^{-2}$. As previously, we have considered the Euler–Maruyama method \cite{KP} and we have used the same simulations of the normal distribution in order to compare the different paths.
\begin{figure}[htp!]
\begin{center}
\includegraphics[width=12cm]{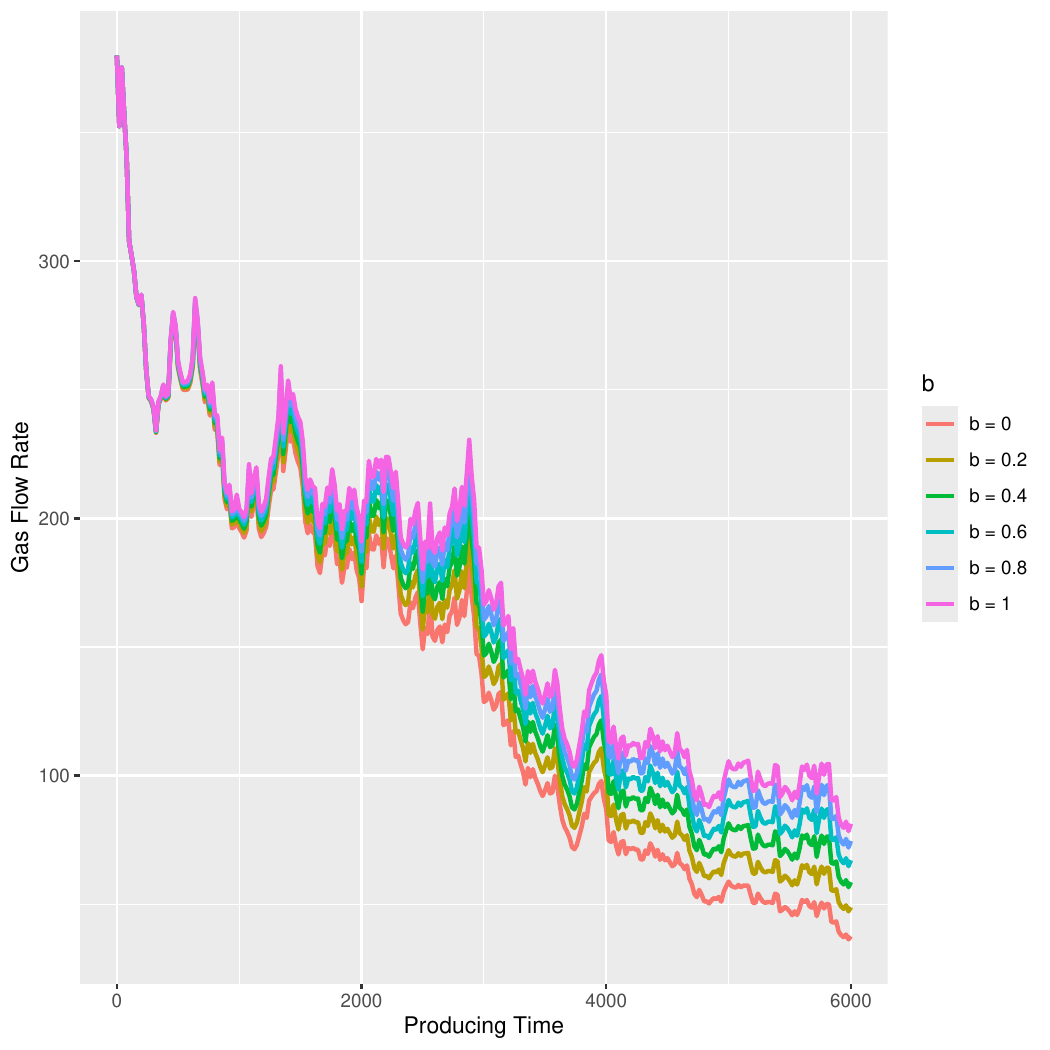}
\end{center}
\caption{Simulations of the second stochastic models}
\label{fig:sde2}
\end{figure}

%%%%%%%%%%%%%%%%%%%%%%%%%%%%%%%%%%%%%%%%%%%%%%%%%%%%%%%%%%%%%%%%%%%
\section{Hitting time problem}
%%%%%%%%%%%%%%%%%%%%%%%%%%%%%%%%%%%%%%%%%%%%%%%%%%%%%%%%%%%%%%%%%%%

An important question that could raise in petroleum context is the following one: when the production rate at a given site will be lower than a certain threshold? Thus one can be interested in the hitting time $T_{Q,x}$ of level $x<q_0$ for the stochastic process $Q=(Q_t)$:
$$
T_{Q,x} = \inf \{ t ; Q_t \leqslant x \}  = \inf \{ t ; Q_t = x \} ,
$$
since $Q$ has continuous sample paths. Such random variable has been well studied in the case of exponential decline ($b=0$) or harmonic decline ($b=1$) since it corresponds to some classical processes: the Ornstein-Ulhenbeck process (see \cite{APP} or \cite{Linetsky} for instance) in the first case and the geometric Brownian motion in the second one. In fact, for any value of $b$, the problem can be reduced to the first-passage time of a Brownian motion through a one-sided moving barrier. Thus the first subsection is devoted to a (non-exhaustive) review of some results about it. In addition, we provide a lemma that will be used later. Then in the two last subsections we discuss about the hitting time distributions for our problem. In particular we will compare fist passage time distribution in function of the parameter $b$. To this purpose we will also denote $T_{Q,x}(b)$ instead of $T_{Q,x}$.

%------------------------------------------%
\subsection{Brownian motion and hitting times}
%------------------------------------------%

The first-passage time of a Brownian motion through a boundary is not an easy problem and the distribution of this random variable is not known explicitly except for a constant or a linear barrier. In these cases the hitting time distribution is called the inverse Gaussian distribution.

Let $(B_t)$ be a standard Brownian motion and $c(\cdot)$ a real function. Consider the first-passage time of $(B_t)$ through the boundary $c(\cdot)$:
$$
T_{B,c(\cdot)} = \inf \{ t ; B_t = c(t) \} .
$$
If the $c(\cdot)$ is constant or linear, then $T_{B,c(\cdot)}$ has the inverse Gaussian distribution \cite{FC}. More precisely, if $c(t)=at+b$, then the probability distribution function of $T_{B,c(\cdot)}$ is given by:
$$
f(t) = \frac{|b|}{\sqrt{2\pi t^3}} \exp \left( - \frac{(at+b)^2}{2t} \right).
$$
It corresponds to the inverse Gaussian distribution with parameters $(|b|/a,b^2)$. The expectation of $T_{B,c(\cdot)}$ is equal to $|b|/a$ and the variance to $|b|/a^3$. The Laplace transform of such probability distribution has been obtained by Abdel-Hameed and Nakhi \cite{AH-Nakhi} (see also \cite{AH:survey}, page 19):
$$
{\cal{L}}_{T_{B,c(\cdot)}}(s) = \mathbb{E}[e^{-sT_{B,c(\cdot)}}] = \exp\left( -|b|\left(\sqrt{a^2 + 2s}-a\right) \right).
$$
Based on these two special cases (constant and linear boundaries), several authors study the case of constant piecewise and linear piecewise boundaries (see \cite{Abundo1} and references therein). A general expression (not necessary leading to an explicit formula, excepted for some cases) was given by Salminen \cite{Salminen} in the case where $c(\cdot)$ is twice continuously differentiable on $[0,+\infty)$.  

We will compare hitting time distribution according to parameters values (especially the parameter $b$). We first recall the definition of the stochastic comparison in the usual sense \cite{MullerStoyan}:
\begin{defi}
The random variable $X$ is said to be stochastically smaller than te random variable $Y$ if and only if:
$$
\forall t \in \mathbb{R} , \quad \mathbb{P}[X>t] \leqslant \mathbb{P}[Y>t].
$$
It will denoted as follows: $X \preceq_{st} Y$.
\end{defi}

We recall that $X \preceq_{st} Y$ implies that $\mathbb{E}[X] \leqslant \mathbb{E}[Y]$. Now we give the lemma that will be used later (proofs are omitted since there is no real difficulty):

\begin{lemma}\label{lem:stoch.comp}
\hspace{1cm}
\begin{enumerate}
\item If $X$ and $Y$ are two positive random variables defined on the same probability space such that $X \preceq_{st} Y$ and if $f$ is a one-to-one map defined on $\mathbb{R}^+$, then $f(X) \preceq_{st} f(Y)$.
\item If $f$ and $g$ are two functions such that that for any $t \in \mathbb{R}$ $f(t) \leqslant g(t)$, then for any random variable $X$ $f(X) \preceq_{st} g(X)$.
\item Let $f$ and $g$ two continuous functions defined on $\mathbb{R}^+$ such that:
$$
\forall t \geqslant 0 , \quad f(t) \geqslant g(t) ,
$$
with $g(0) \leqslant f(0) \leqslant 0$. Then $T_{B,f(\cdot)} \preceq_{st} T_{B,g(\cdot)}$.
\end{enumerate}
\end{lemma}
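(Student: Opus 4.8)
The plan is to treat the three items separately, since each reduces to a short manipulation of the survival-function definition of $\preceq_{st}$; only the third requires a genuine, though still elementary, argument.

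For item (1) I would take $f$ to be (strictly) increasing, which is the situation in which the statement is meaningful and the only case needed later. Writing $f^{-1}$ for its inverse on the range of $f$, one has $\{f(X)>t\}=\{X>f^{-1}(t)\}$ for every $t$ in that range, while for $t$ below (resp. above) the range the probability equals $1$ (resp. $0$) for both $X$ and $Y$. Hence $\mathbb{P}[f(X)>t]=\mathbb{P}[X>f^{-1}(t)]\leqslant\mathbb{P}[Y>f^{-1}(t)]=\mathbb{P}[f(Y)>t]$, the middle inequality being exactly the hypothesis $X\preceq_{st}Y$ evaluated at $f^{-1}(t)$. For item (2) I would use the trivial coupling on the same variable $X$: since $f(s)\leqslant g(s)$ for every $s$, we have $f(X)\leqslant g(X)$ surely, so $\{f(X)>t\}\subseteq\{g(X)>t\}$ and therefore $\mathbb{P}[f(X)>t]\leqslant\mathbb{P}[g(X)>t]$ for all $t$ (this in fact gives the stronger almost-sure domination).

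Item (3) is the heart of the lemma, and I would prove it by a pathwise comparison of the two hitting times along a common Brownian trajectory $(B_t)$. Introduce $h(t)=B_t-f(t)$ and $k(t)=B_t-g(t)$, so that $T_{B,f(\cdot)}=\inf\{t;h(t)=0\}$ and $T_{B,g(\cdot)}=\inf\{t;k(t)=0\}$. The assumption $f\geqslant g$ gives $k(t)-h(t)=f(t)-g(t)\geqslant 0$, i.e. $k\geqslant h$ pointwise, while $f(0)\leqslant 0$ and $g(0)\leqslant 0$ give $h(0)=-f(0)\geqslant 0$ and $k(0)=-g(0)\geqslant 0$. The key observation is then that at the first zero $T_{B,g(\cdot)}$ of $k$ one has $h(T_{B,g(\cdot)})\leqslant k(T_{B,g(\cdot)})=0$; since $h$ is continuous with $h(0)\geqslant 0$, the intermediate value theorem forces $h$ to vanish somewhere on $[0,T_{B,g(\cdot)}]$, whence $T_{B,f(\cdot)}\leqslant T_{B,g(\cdot)}$. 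When $T_{B,g(\cdot)}=+\infty$ the inequality is trivial. As this holds for every trajectory, we obtain $T_{B,f(\cdot)}\leqslant T_{B,g(\cdot)}$ almost surely, which a fortiori yields $T_{B,f(\cdot)}\preceq_{st}T_{B,g(\cdot)}$.

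The only point deserving care, and the step I would flag as the main (if modest) obstacle, is the justification in item (3) that the first zero of $k$ is genuinely attained with value $0$ and that $h$ must cross zero no later. Continuity of $B$, $f$ and $g$ makes $h$ and $k$ continuous, so each zero set is closed and the defining infimum is attained whenever finite; the sign change of $h$ from $h(0)\geqslant 0$ to $h(T_{B,g(\cdot)})\leqslant 0$ then supplies the required zero. I would also note explicitly that passing from the almost-sure domination to the stochastic order is immediate, since $T_{B,f(\cdot)}\leqslant T_{B,g(\cdot)}$ a.s. gives $\mathbb{P}[T_{B,f(\cdot)}>t]\leqslant\mathbb{P}[T_{B,g(\cdot)}>t]$ for every $t$, which is precisely the definition of $\preceq_{st}$.
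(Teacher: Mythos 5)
Your proof is correct. Note that the paper itself omits the proof of this lemma entirely (``proofs are omitted since there is no real difficulty''), so there is no argument of the author's to compare against; what you supply is the standard and expected reasoning. Item (2) via the pointwise coupling $f(X)\leqslant g(X)$ and item (3) via the pathwise intermediate-value argument on $h(t)=B_t-f(t)$ and $k(t)=B_t-g(t)$ are both sound, and you rightly flag the only delicate points: that the infimum defining $T_{B,g(\cdot)}$ is attained when finite (closedness of the zero set of a continuous function), and that almost-sure ordering implies the stochastic order. One remark worth making explicit: as literally stated, item (1) is false for a decreasing one-to-one map (which reverses the stochastic order), so your restriction to increasing $f$ is not merely a convenience but a necessary correction of the statement; it is harmless here because the only instance used later is $f=\tau^{-1}$, which is increasing.
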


%------------------------------------------%
\subsection{Case with constant volatility}
%------------------------------------------%

In the case of exponential decline, the hitting time of a fixed value has been studied recently. There is no closed form expression for the probability distribution function \cite{APP}. However expressions were obtained that yield numerical computations. For instance, the Laplace transform of $T_{Q,x}$ can be expressed in term of Hermite functions $H_\nu$ or in term of parabolic cylinder functions $D_\nu$ \cite{Breiman,Siegert}:
$$
\mathbb{E}\left[e^{-uT_{Q,x}}\right] = \frac{H_{-u/d_0}\left( \tfrac{q_0}{\sigma}\sqrt{d_0} \right)}{H_{-u/d_0}\left( \tfrac{x}{\sigma}\sqrt{d_0} \right)} = \frac{e^{d_0q_0^2/2\sigma^2}}{e^{d_0x^2/2\sigma^2}} \frac{D_{-u/d_0}\left( \tfrac{q_0}{\sigma}\sqrt{2d_0} \right)}{D_{-u/d_0}\left( \tfrac{x}{\sigma}\sqrt{2d_0} \right)} .
$$
Then one can get numerically the p.d.f. of $T_{Q,x}$ applying for an appropriate algorithm (see \cite{lt} for instance). Otherwise a way to compute the hitting time distribution is to consider the representation of $(Q_t)$ by a time rescaled Brownian motion. The following lemma states the relationship between these two hitting time problems:

\begin{prop}
For any $t \geqslant 0$, let $c_1(t) = \frac{q_0}{\sigma} \left( \frac{x}{q_{\tau^{-1}(t)}}-1 \right)$. Let $W$ be a Brownian motion. Then $T_{Q,x}$ and $\tau^{-1}(T_{W,c_1(\cdot)})$ have the same distribution.
\end{prop}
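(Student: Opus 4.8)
The plan is to start from the time-changed Brownian motion representation already established in Equation~(\ref{eqn:dds}). The Dambis--Dubins--Schwartz theorem provides a Brownian motion $(W_t)$ such that, pathwise, the Wiener integral $Z_t$ coincides with $W_{\tau(t)}$; substituting this into Equation~(\ref{eqn:sol.sde1}) gives $Q_t = (1+bd_0t)^{-1/b}(q_0 + \sigma W_{\tau(t)})$. Using the identity $q_t = q_0(1+bd_0t)^{-1/b}$, the level-crossing condition $Q_t = x$ is then equivalent, after solving for $W_{\tau(t)}$, to
$$
W_{\tau(t)} = \frac{q_0}{\sigma}\left( \frac{x}{q_t} - 1 \right).
$$

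Next I would perform the deterministic time change $s = \tau(t)$, that is $t = \tau^{-1}(s)$. Under this substitution the right-hand side above becomes exactly $c_1(s)$, so that the event $\{Q_t = x\}$ at production time $t$ corresponds to the event $\{W_s = c_1(s)\}$ at clock time $s = \tau(t)$. Because $\tau$ is continuous and strictly increasing on $[0,+\infty)$ with $\tau(0)=0$ (indeed a direct computation gives $\tau'(t) = (1+bd_0t)^{2/b} > 0$), it is a bijection of $[0,+\infty)$ onto itself, hence it maps first crossings to first crossings. Consequently $\tau(T_{Q,x}) = T_{W,c_1(\cdot)}$, and applying the continuous increasing inverse yields $T_{Q,x} = \tau^{-1}(T_{W,c_1(\cdot)})$, which is the claimed identity.

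The main point requiring care is the passage from the marginal identity in distribution of Equation~(\ref{eqn:dds}) to a statement about the hitting times, which are path functionals rather than one-dimensional marginals. The key is that the Dambis--Dubins--Schwartz representation holds simultaneously for all $t$ (possibly on an enlarged probability space) with one and the same $W$, and that the time change $\tau$ is \emph{deterministic}; this is precisely what lets the correspondence of first-passage times be read off pathwise and then transferred to equality in distribution, since $W$ is a standard Brownian motion. I would also note that $x<q_0$ forces $c_1(0) = (x-q_0)/\sigma < 0$, so the moving boundary $c_1(\cdot)$ starts strictly below the origin and $T_{W,c_1(\cdot)}$ is well defined, consistently with the hitting problem for the decreasing process $(Q_t)$.
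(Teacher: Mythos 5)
Your proposal is correct and follows essentially the same route as the paper: both use the Dambis--Dubins--Schwartz representation $Q_t=(1+bd_0t)^{-1/b}(q_0+\sigma W_{\tau(t)})$ and the fact that the deterministic, strictly increasing time change $\tau$ with $\tau(0)=0$ carries the no-crossing event $\{Q_u>x,\ u\leqslant t\}$ onto $\{W_r>c_1(r),\ r\leqslant\tau(t)\}$. Your remark that the identification must hold at the level of the whole path (one and the same $W$ for all $t$), not just for one-dimensional marginals, is exactly the point the paper uses implicitly when it rewrites $\mathbb{P}[\forall u\in[0,t],\,Q_u\geqslant x]$ through the representation, so the two arguments coincide in substance.
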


\begin{proof}
It is an application of the representation of $(Q_t)$ given by Equation~(\ref{eqn:dds}). Indeed, for any $t \geqslant 0$, one gets:
\begin{eqnarray*}
\mathbb{P}[T_{Q,x} > t]
 & = & \mathbb{P}\left[ \forall u \in [0,t], Q_u \geqslant x \right] \\
 & = & \mathbb{P}\left[ \forall u \in [0,t], q_u \left(q_0+\frac{\sigma}{q_0} W_{\tau(u)}\right) > x\right] \\
 & = & \mathbb{P}\left[ \forall r \in [0,\tau(t)], q_{\tau^{-1}(r)} \left(q_0+\frac{\sigma}{q_0} W_r\right)> x\right] \\
 & = & \mathbb{P}\left[ \forall r \in [0,\tau(t)], W_r > \frac{q_0}{x}\left( \frac{x}{q_{\tau^{-1}(r)}}-1\right) \right] \\
 & = & \mathbb{P}[T_{W,c_1(\cdot)} > \tau(t)] \\
 & = & \mathbb{P}[\tau^{-1}(T_{W,c_1(\cdot)}) > t] .
\end{eqnarray*}
using that $\tau$ is one-to-one map with $\tau(0)=0$.
\end{proof}

From the expression of $\tau$ given in the previous, one can easily compute the inverse function:
$$
\forall t \geqslant 0 , \quad \tau^{-1}(t)= \frac{1}{bd_0} \left[ (1+t(b+2)d_0)^{\tfrac{2}{b+2}} -1 \right] .
$$
Hence the boundary involved in the previous lemma is:
$$
c_1(t) = \frac{q_0}{\sigma} \left( \frac{x}{q_{\tau^{-1}(t)}}-1 \right)
= \frac{1}{\sigma}\left[ x\left(1+d_0(b+2)t\right)^{\tfrac{1}{b+2}} - q_0 \right] .
$$
Notice that $c_1(0)=(x-q_0)/\sigma<0$. As $b$ tends to zero, $c_1$ turns to be a square-root boundary:
$$
c_1(t) = \frac{1}{\sigma}\left[ x\sqrt{1+2d_0t} - q_0 \right] .
$$

We now compare stochastically the hitting time distributions according to the value of the parameter $b$:

\begin{prop}\label{prop:stcomp.cste}
If $b_1 \leqslant b_2$, then $T_{Q,x}(b_1) \preceq_{st} T_{Q,x}(b_2)$.
\end{prop}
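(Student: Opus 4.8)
The plan is to prove the stronger \emph{pathwise} inequality $T_{Q,x}(b_1) \leqslant T_{Q,x}(b_2)$ on a common probability space, from which the stochastic order $T_{Q,x}(b_1) \preceq_{st} T_{Q,x}(b_2)$ follows at once. First I would realise the two solutions $Q^{(b_1)}$ and $Q^{(b_2)}$ of Equation~(\ref{eqn:sde1}) driven by the \emph{same} Brownian motion $(B_t)$ and starting from the same value $q_0$; this is legitimate since each is the unique strong solution of a scalar linear SDE. Writing $a_i(t)=d_0/(1+b_id_0t)$, the decisive structural fact is that the volatility is constant and hence identical for both values of $b$, so the martingale parts cancel when the two equations are subtracted. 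Consequently $D_t := Q_t^{(b_2)}-Q_t^{(b_1)}$ solves, pathwise, the noise-free linear ordinary differential equation
\[
D_t' = -a_2(t)\,D_t + \bigl(a_1(t)-a_2(t)\bigr)\,Q_t^{(b_1)}, \qquad D_0=0 .
\]

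Next I would integrate this ODE with the integrating factor $\exp\bigl(\int_0^t a_2(u)\,\mathrm{d}u\bigr)$, which yields
\[
D_t = \int_0^t \exp\Bigl(-\int_s^t a_2(u)\,\mathrm{d}u\Bigr)\bigl(a_1(s)-a_2(s)\bigr)\,Q_s^{(b_1)}\,\mathrm{d}s .
\]
Since $b_1\leqslant b_2$ gives $1+b_1d_0s\leqslant 1+b_2d_0s$ and therefore $a_1(s)\geqslant a_2(s)\geqslant 0$ for every $s\geqslant 0$, the sign of the integrand is governed entirely by the sign of $Q_s^{(b_1)}$. The point is to control that sign only up to the first hitting time $T_1:=T_{Q,x}(b_1)$: by continuity of the paths and $Q_0=q_0>x$, one has $Q_s^{(b_1)}>x$ for every $s\in[0,T_1)$, so that, using $x\geqslant 0$ (which holds in the intended range $0\leqslant x<q_0$), the integrand is nonnegative on $[0,t]$ for every $t\leqslant T_1$ and hence $D_t\geqslant 0$ there.

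I would then conclude as follows. On $[0,T_1)$ we have $Q_t^{(b_2)} = Q_t^{(b_1)} + D_t \geqslant Q_t^{(b_1)} > x$, so the process $Q^{(b_2)}$ cannot reach level $x$ before $T_1$; that is, $T_{Q,x}(b_2)\geqslant T_1$ almost surely. Passing to probabilities, $\{T_{Q,x}(b_1)>t\}\subseteq\{T_{Q,x}(b_2)>t\}$ for every $t$, which is exactly $T_{Q,x}(b_1)\preceq_{st}T_{Q,x}(b_2)$.

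The step I expect to be the main obstacle is precisely the sign control of $Q^{(b_1)}$ inside the integral: unlike the drift coefficients, the Gaussian process $Q^{(b_1)}$ is not sign-definite on all of $\mathbb{R}_+$, so the naive assertion that the paths are monotone in $b$ everywhere fails. The remedy above is to stop at $T_1$ and invoke $x\geqslant 0$, which keeps $Q^{(b_1)}$ nonnegative exactly on the interval that matters. An alternative route through the moving-barrier representation of the previous proposition together with Lemma~\ref{lem:stoch.comp}(3) is conceivable, but it is less convenient here because the Dambis--Dubins--Schwartz time change $\tau$ itself depends on $b$, so one would have to compare the barriers $c_1^{(b)}$ and the time changes $\tau_b$ simultaneously; the coupling argument sketched above avoids this difficulty entirely.
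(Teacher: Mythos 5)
Your argument is correct, but it is genuinely different from the one in the paper. The paper works through the Dambis--Dubins--Schwartz representation of Equation~(\ref{eqn:dds}): it identifies $T_{Q,x}(b)$ in law with $\tau_b^{-1}(T_{W,c_1(\cdot)}(b))$ and then chains the three parts of Lemma~\ref{lem:stoch.comp} --- part~(3) to order the barrier hitting times $T_{W,c_1(\cdot)}(b_1)\preceq_{st}T_{W,c_1(\cdot)}(b_2)$, part~(1) to push this through $\tau_{b_1}^{-1}$, and part~(2) to replace $\tau_{b_1}^{-1}$ by $\tau_{b_2}^{-1}$. That route requires checking that both the barrier $c_1^{(b)}$ and the inverse time change $\tau_b^{-1}$ are monotone in $b$ (the former reduces to $\beta\mapsto(1+d_0\beta t)^{1/\beta}$ being decreasing, and needs $x\geqslant 0$ just as your argument does). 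Your synchronous coupling exploits instead the specific structure of the constant-volatility model: driving both equations by the same Brownian motion makes the martingale parts cancel, so the difference $D_t$ obeys a pathwise linear ODE whose sign you control up to $T_{Q,x}(b_1)$ via the observation that $Q_s^{(b_1)}>x\geqslant 0$ there --- the stopping at $T_1$ is exactly the right fix for the fact that $Q^{(b_1)}$ is not globally nonnegative. What you gain is a strictly stronger conclusion (almost-sure ordering of the hitting times under the coupling, not merely stochastic ordering) and independence from the barrier machinery; what you lose is portability, since the cancellation of the noise terms is special to $\alpha(x)=\sigma$, whereas the paper's barrier-plus-time-change scheme is reused essentially verbatim for Proposition~\ref{prop:stcomp.lin} in the linear-volatility case (where your trick would only work after first passing to $\ln Q_t$). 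Both proofs are valid; yours is self-contained and arguably more transparent for this particular proposition.
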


\begin{proof}
Let $b_1 \leqslant b_2$. Applying the third property of Lemma~\ref{lem:stoch.comp}, it follows that
$$
T_{W,c_1(\cdot)}(b_1) \preceq_{st} T_{W,c_1(\cdot)}(b_2).
$$
Now, using the first property in this Lemma,
$$
T_{Q,x}(b_1) \stackrel{(d)}{=} \tau^{-1}_{b_1}(T_{W,c_1(\cdot)}(b_1)) \preceq_{st} \tau^{-1}_{b_1}(T_{W,c_1(\cdot)}(b_2)),
$$
and the second property still in Lemma~\ref{lem:stoch.comp} gives
$$
\tau^{-1}_{b_1}(T_{W,c_1(\cdot)}(b_2)) \preceq_{st} \tau^{-1}_{b_2}(T_{W,c_1(\cdot)}(b_2)) \stackrel{(d)}{=} T_{Q,x}(b_2) .
$$
Joining these stochastic inequalities lead to the result stated in the proposition.
\end{proof}

This is consistent with the disadvantages pointed out in the literature when using the Arps equation: the exponential decline ($b=0$) leads to an underestimation of reserves and production rates while the harmonic decline ($b=1$) leads to an overestimation of reservoir performance \cite{Khanamiri,LiHorne}.

Using the above proposition with $b_1=0$ towards with the expression for the mean first-passage time obtained by Sato (\cite{Sato}, see also \cite{RicciardiSato}), we have the following corollary (using the same notations as in \cite{Sato}):
\begin{coro}
The mean hitting time for $(Q_t)$ to reach the level $x<q_0$ is bounded as follows:
$$
\mathbb{E}[T_{Q,x}] \geqslant \phi^{(1)}(0,x) - \phi^{(1)}(0,q_0),
$$
where:
$$
\phi^{(1)}(0,z) = \frac{1}{d_0} \left( \frac{z\sqrt{d_0 \pi}}{\sigma} F\left( \frac{1}{2},\frac{3}{2},\frac{d_0z^2}{\sigma^2} \right) + \sum_{m=0}^{\infty} \frac{2^m}{(m+1)(m+2)!!} \left( \frac{z\sqrt{d_0}}{\sigma} \right)^{2m+2} \right). 
$$
in which $F$ denotes the Kummer function (or confluent hypergeometric function).
\end{coro}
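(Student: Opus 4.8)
The plan is to read the corollary as the conjunction of two facts: the monotonicity in $b$ established in Proposition~\ref{prop:stcomp.cste}, and the closed-form evaluation of the mean hitting time of the Ornstein--Uhlenbeck process (the case $b=0$) obtained by Sato. Only the gluing of these two ingredients is new here; all of the analytic content lives inside Sato's formula, so the proof should be short.

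First I would specialize Proposition~\ref{prop:stcomp.cste} to $b_1=0 \leqslant b_2=b$, which gives $T_{Q,x}(0) \preceq_{st} T_{Q,x}(b)$. Since dominance in the usual stochastic order forces the ordering of expectations (the remark recorded just after the definition of $\preceq_{st}$), this yields at once
$$
\mathbb{E}[T_{Q,x}] = \mathbb{E}[T_{Q,x}(b)] \geqslant \mathbb{E}[T_{Q,x}(0)],
$$
so it remains only to identify the right-hand side with Sato's expression.

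For $b=0$ the process solving Equation~(\ref{eqn:sde1}) is the Ornstein--Uhlenbeck process $\mathrm{d}Q_t = -d_0 Q_t\,\mathrm{d}t + \sigma\,\mathrm{d}B_t$, whose infinitesimal generator is $\mathcal{L} = \tfrac{1}{2}\sigma^2 \partial_{zz} - d_0 z\,\partial_z$. The function $u(z)=\mathbb{E}_z[T_{Q,x}]$ is the solution of $\mathcal{L}u=-1$ with $u(x)=0$ that does not explode as $z\to+\infty$; integrating this linear equation with the scale density $\exp(d_0 z^2/\sigma^2)$ against the speed density $\exp(-d_0 z^2/\sigma^2)$ expresses the mean first-passage time as the difference of a single primitive $\phi^{(1)}(0,\cdot)$ evaluated at the two levels. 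Expanding the iterated Gaussian integrals in the Hermite/Kummer basis (recall $\int_0^a e^{t^2}\,\mathrm{d}t = a\,F(\tfrac12,\tfrac32,a^2)$) produces, after matching Sato's standardized parameters to $(d_0,\sigma)$, exactly the confluent-hypergeometric term together with the double-factorial series displayed in $\phi^{(1)}(0,z)$, so that $\mathbb{E}[T_{Q,x}(0)]$ equals the difference $\phi^{(1)}(0,x)-\phi^{(1)}(0,q_0)$ appearing in the statement.

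The step that requires genuine care, and the one I would treat as the main obstacle, is this last identification: translating Sato's notation into the present parametrization, checking that the non-explosion condition at $+\infty$ selects precisely the primitive with the stated coefficients, and verifying the correct orientation of the difference at the levels $x$ and $q_0$ (so that the sign is the intended one). The stochastic-comparison half of the argument, by contrast, is a one-line invocation of Proposition~\ref{prop:stcomp.cste} followed by the elementary implication $X\preceq_{st}Y \Rightarrow \mathbb{E}[X]\leqslant\mathbb{E}[Y]$.
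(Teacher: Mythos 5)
Your argument is exactly the one the paper intends: it states the corollary as an immediate consequence of Proposition~\ref{prop:stcomp.cste} applied with $b_1=0$ (so that $T_{Q,x}(0)\preceq_{st}T_{Q,x}(b)$ forces $\mathbb{E}[T_{Q,x}(0)]\leqslant\mathbb{E}[T_{Q,x}(b)]$), combined with Sato's closed-form expression for the mean first-passage time of the Ornstein--Uhlenbeck process, which is cited rather than rederived. Your additional sketch of how Sato's formula arises from the generator and the scale/speed densities goes beyond what the paper records, but the logical structure is the same and correct.
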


Some approximations of the mean first-passage time of a fixed level by the Ornstein-Uhlenbeck process have been studied by Thomas \cite{Thomas}.

%------------------------------------------%
\subsection{Case with linear volatility}
%------------------------------------------%

The exponential case, i.e. with $b=0$, is easy to derive. In fact, in such case, $T_{Q,x}$ is the hitting time of a constant by a Brownian motion with a linear drift. More precisely, after some easy computations, one gets:
$$
T_{Q,x} = \inf \left\{ t ; -(d_0+\tfrac{1}{2}\sigma^2)t + \sigma B_t \geqslant \ln\left(\frac{x}{q_0}\right) \right\}.
$$
It follows that $T_{Q,x}$ has the inverse Gaussian distribution with parameter:
\begin{equation}\label{eqn:ig.gmb.param}
m= \frac{2}{2d_0+\sigma^2} \ln\left(\frac{q_0}{x}\right) \quad {\mbox{and}} \quad 
\lambda= \left( \frac{1}{\sigma} \ln\left(\frac{q_0}{x}\right) \right)^2 .
\end{equation}
Let us now consider the general case: $T_{Q,x}$ is the hitting time by a Brownian motion of barrier of the form:
\begin{eqnarray*}
c_2(t) 
 & = & \frac{1}{2}\sigma t - \frac{1}{\sigma} \ln\left(\frac{q_t}{x} \right) \\
 & = & \frac{1}{2}\sigma t + \frac{1}{\sigma} \ln\left(\frac{x}{q_0} \right) +\frac{1}{b\sigma} \ln\left(1+bd_0t \right) .\\
\end{eqnarray*}
Using our notations we have $T_{Q,x} \stackrel{(d)}{=} T_{B,c_2(\cdot)}$. The boundary $c_2(\cdot)$ can bounded as follows by a linear boundary:
$$
\forall t \geqslant 0 , \quad c_2(t) \leqslant \tilde{c}_2(t) =  \frac{1}{\sigma} \ln\left(\frac{x}{q_0} \right) + \left(\frac{1}{2}\sigma +\frac{d_0}{\sigma} \right) t,
$$
the equality holding if $b=0$. Notice that $c_2(0)=\tilde{c}_2(0)=\log(x/q_0)/\sigma<0$. More generally, using the third property of Lemma~\ref{lem:stoch.comp}, the following proposition holds:
\begin{prop}\label{prop:stcomp.lin}
If $b_1 \leqslant b_2$, then $T_{Q,x}(b_1) \preceq_{st} T_{Q,x}(b_2)$.
\end{prop}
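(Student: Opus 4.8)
The plan is to exploit the distributional identity $T_{Q,x}(b) \stackrel{(d)}{=} T_{B,c_2(\cdot)}$ established just above, together with the third property of Lemma~\ref{lem:stoch.comp}. In contrast with the constant-volatility case (Proposition~\ref{prop:stcomp.cste}), here the Brownian motion is \emph{not} time-changed: the parameter $b$ enters only through the shape of the barrier $c_2(\cdot)$. Consequently a single application of property~3 will suffice, provided I can show that the barrier is monotone in $b$ in the right direction. Writing $c_2(t;b)$ to emphasise the dependence on $b$, I therefore want to prove that $b_1 \leqslant b_2$ forces $c_2(\cdot\,;b_1) \geqslant c_2(\cdot\,;b_2)$ pointwise on $\mathbb{R}^+$; then setting $f=c_2(\cdot\,;b_1)$ and $g=c_2(\cdot\,;b_2)$ will yield $T_{B,c_2(\cdot;b_1)} \preceq_{st} T_{B,c_2(\cdot;b_2)}$, which is the claim.

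The core step is thus the monotonicity of $c_2(t;b)$ in $b$. The first two terms of $c_2(t;b) = \tfrac12\sigma t + \tfrac1\sigma\ln(x/q_0) + \tfrac{1}{b\sigma}\ln(1+bd_0t)$ do not involve $b$, so everything reduces to showing that, for each fixed $t>0$, the map $b \mapsto h(b) = \tfrac1b\ln(1+bd_0t)$ is nonincreasing. I would differentiate to get $h'(b) = b^{-2}\left[\tfrac{bd_0t}{1+bd_0t} - \ln(1+bd_0t)\right]$; substituting $v=bd_0t \geqslant 0$ reduces the sign question to that of $\psi(v) = \tfrac{v}{1+v} - \ln(1+v)$. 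Since $\psi(0)=0$ and $\psi'(v) = -v/(1+v)^2 \leqslant 0$, one gets $\psi(v)\leqslant 0$ for all $v \geqslant 0$, hence $h'(b)\leqslant 0$ and $c_2(t;b)$ is nonincreasing in $b$, as required (at $t=0$ the $b$-dependent term vanishes in the limit, so the monotonicity is non-strict there).

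It remains to check the hypotheses of property~3 of Lemma~\ref{lem:stoch.comp}. The barrier $c_2(\cdot\,;b)$ is continuous on $\mathbb{R}^+$ for every $b>0$, being a sum of a linear term and a logarithm, and as already noted $c_2(0;b) = \tfrac1\sigma\ln(x/q_0) < 0$ independently of $b$; in particular, with $f=c_2(\cdot\,;b_1)$ and $g=c_2(\cdot\,;b_2)$ one has $g(0)=f(0)=\tfrac1\sigma\ln(x/q_0)$, so the condition $g(0)\leqslant f(0)\leqslant 0$ holds. Combining the pointwise inequality $f \geqslant g$ from the previous paragraph with these boundary conditions, property~3 gives $T_{B,c_2(\cdot;b_1)} \preceq_{st} T_{B,c_2(\cdot;b_2)}$, and the representation $T_{Q,x}(b)\stackrel{(d)}{=}T_{B,c_2(\cdot;b)}$ then transfers this to $T_{Q,x}(b_1)\preceq_{st}T_{Q,x}(b_2)$.

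I expect the only real work to be the elementary monotonicity estimate of the second paragraph; the structural part (identifying the barrier and invoking property~3) is immediate once this sign analysis is in place, so there is no genuine obstacle beyond this calculus lemma.
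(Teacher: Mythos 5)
Your proposal is correct and follows exactly the route the paper intends: it reduces the claim to the pointwise monotonicity of the barrier $c_2(\cdot\,;b)$ in $b$ and then invokes the third property of Lemma~\ref{lem:stoch.comp}, which is all the paper says (it states the proposition without further proof). Your elementary verification that $b \mapsto \tfrac{1}{b}\ln(1+bd_0t)$ is nonincreasing, via $\psi(v)=\tfrac{v}{1+v}-\ln(1+v)\leqslant 0$, correctly supplies the detail the paper leaves implicit.
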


As we recall previously, this proposition implies that $\mathbb{E}[T_{Q,x}(b_1)] \leqslant \mathbb{E}[T_{Q,x}(b_2)]$. In particular, $T_{Q,x}(0)$ is inverse Gaussian distributed with parameters given in Equation~(\ref{eqn:ig.gmb.param}). So we have:
$$
\mathbb{E}[T_{Q,x}] \geqslant  \frac{2 \ln(q_0/x)}{2d_0 + \sigma^2}.
$$
This is also coherent with the disadvantages pointed out in the literature as we discussed above for the first stochastic model. This lower bound tends to the hitting time of level $x$ in the deterministic exponential curve decline as the parameter $\sigma^2$ tends to zero.

Since the function $c_2(\cdot)$ is continuously differentiable, $T_{Q,x}$ has a continuous probability density function \cite{Ferebee}. The exact probability density function $f_{T_{Q,x}}(t)$ of $T_{Q,x}$ cannot be compute explicitly but can be approximated using the result by Durbin \cite{Durbin}. Indeed $f_{T_{Q,x}}(t)$ is approximatively equal to $p(t)f(t)$ with:
$$
p(t) = \frac{1}{\sigma t}\ln \left(\frac{q_0}{x}\right) - \frac{1}{b \sigma t}\ln(1+bd_0t) - \frac{d_0}{\sigma(1+bd_0t)} 
$$
and
$$
f(t) = \frac{1}{\sqrt{2\pi t}} \exp\left( -\frac{c_2(t)^2}{2t} \right).
$$
For various values of $b$ and with $d_0 = 3 \times 10^{-4}$, $q_0 = 380$, $\sigma^2 = 10^{-2}$ and $x = 100$, we have computed numerically the approximation of $f_{T_{Q,x}}(t)$ which are plotted on the Figure \ref{fig:ht2}.
\begin{figure}[htp!]
\begin{center}
\includegraphics[width=12cm]{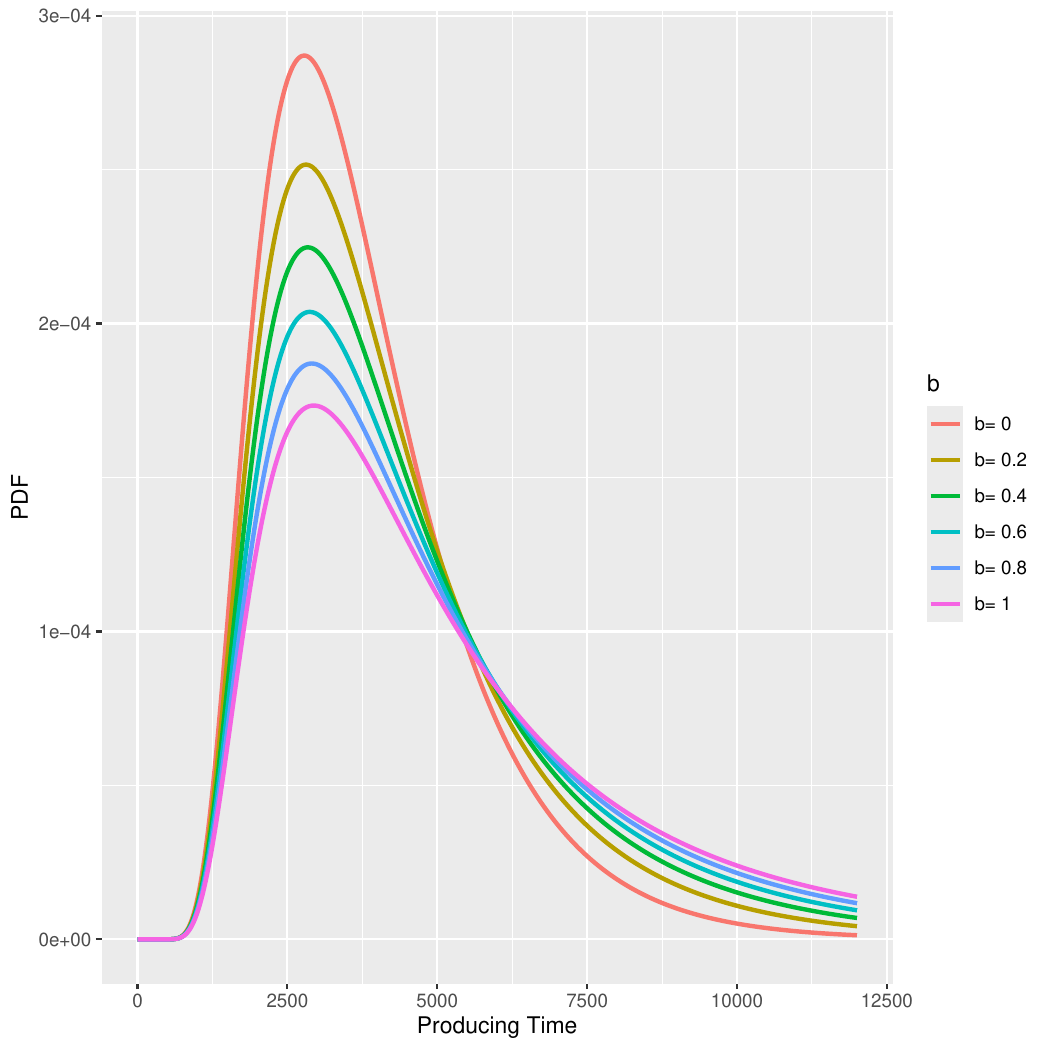}  
\end{center}
\caption{Approximation of the pdf of the hitting time for the second model}
\label{fig:ht2}
\end{figure}

%%%%%%%%%%%%%%%%%%%%%%%%%%%%%%%%%%%%%%%%%%%%%%%%%%%%%%%%%%%%%%%%%%%
\section{Conclusion}
%%%%%%%%%%%%%%%%%%%%%%%%%%%%%%%%%%%%%%%%%%%%%%%%%%%%%%%%%%%%%%%%%%%

We introduce here two stochastic versions of the Arps equation based on scalar linear SDEs. Solutions have been calculated explicitly. First passage time (FPT) distribution of a fixed level was also considered. Stochastic comparison properties show that the disadvantages pointed out in the literature when using the Arps equation hold also for these models. Further study on these stochastic models could be the statistical inference. In fact, in oil production context, available data are obtained rather from the oil production cumulative rate. It means that one does not observe $(Q_t)$ at several times, but the stochastic process $(Q_t^c)$ defined as follows:
$$
\forall t \geqslant 0 , \quad Q_t^c = \int_0^t Q_u \,\mathrm{d}u .
$$
On Figure \ref{fig:csde} we present simulations of $(Q_t^c)$ from the two stochastic models we studied here.
\begin{figure}
\begin{center}
\begin{tabular}{cc}
\includegraphics[width=6cm]{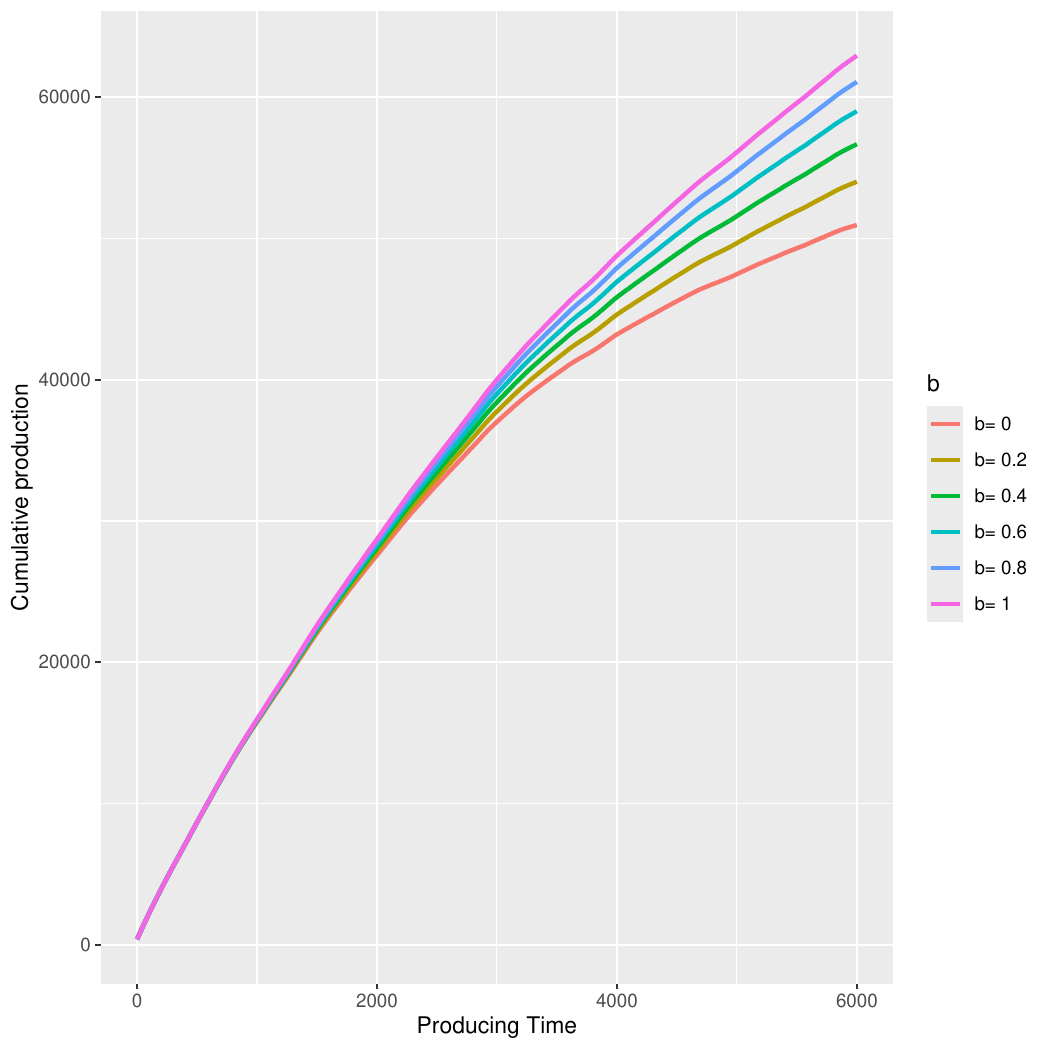} & 
\includegraphics[width=6cm]{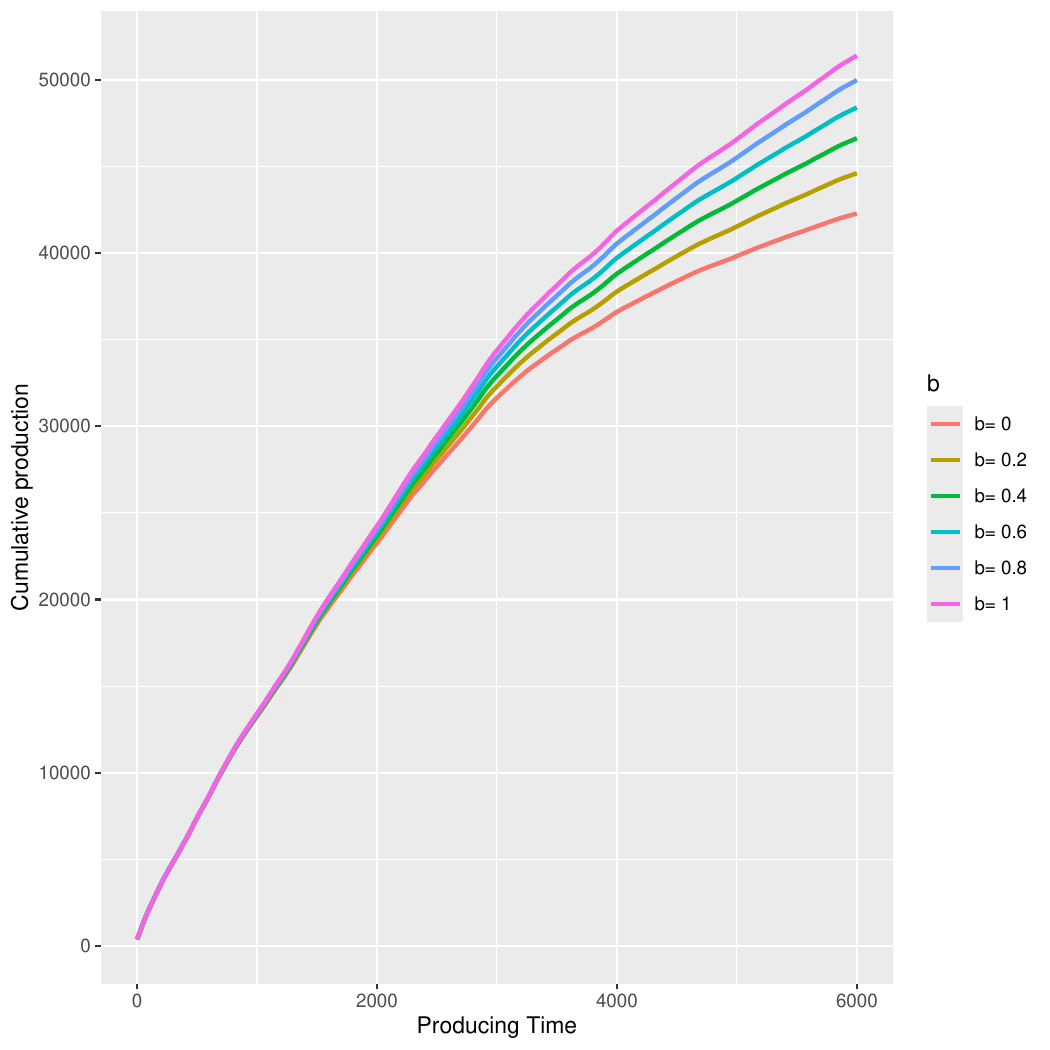}
\end{tabular}
\end{center}
\caption{Simulations of the cumulative stochastic models}
\label{fig:csde}
\end{figure}
Simulations on the left concerns the stochastic process $(Q_t^c)$ when $(Q_t)$ has a constant volatility while those on the right concerns the stochastic process $(Q_t^c)$ when $(Q_t)$ has a (positive) linear volatility. From the previous simulations of the two SDEs, it is not so surprisingly that sample paths on the left plot are more smooth than the ones of the right. If $(Q_t)$ is a Gaussian process, then $(Q_t^c)$ is still a Gaussian process and is a called integrated Gaussian process (this is clearly the case for the first stochastic Arps model we introduced). Statistical inference for such kind of models have been studied previously in the literature, see \cite{Gloter1} for instance. Beside, the approach developed here could be adapted to other models for flow behaviour of shale gas, like the Matthews‑Lefkovits model, the Generalized Weng’s prediction model and the Stretched Exponential Decline Production model, see \cite{Coutry} or \cite{Tan}.

%%%%%%%%%%%%%%%%%%%%%%%%%%%%%%%%%%%%%%%%%%%%%%%%%%%%%%%%%%%%%%%%%%%
% Remerciements
%%%%%%%%%%%%%%%%%%%%%%%%%%%%%%%%%%%%%%%%%%%%%%%%%%%%%%%%%%%%%%%%%%%

%\vspace{0.5cm}

%\paragraph{Acknowledgments} 

%%%%%%%%%%%%%%%%%%%%%%%%%%%%%%%%%%%%%%%%%%%%%%%%%%%%%%%%%%%%%%%%%%%
% Bibliographie
%%%%%%%%%%%%%%%%%%%%%%%%%%%%%%%%%%%%%%%%%%%%%%%%%%%%%%%%%%%%%%%%%%%

%\newpage

\bibliographystyle{plain}

%%%%%%%%%%%%%%%%%%%%%%%%%%%%%%%%%%%%%%%%%%%%%%%%%%%%%%%%%%%%%%%%%%%
\end{document}